\newcommand{\are}{\preccurlyeq}
\newcommand{\s}{F}
\newcommand{\ing}{P}
\newcommand{\pr}{PP}
\newcommand{\UInt}{\mathsf{U}}
\newcommand{\I}{\mathsf{I}}
\newcommand{\supr}{Mub}
\let\oldbibliography\thebibliography
\renewcommand{\thebibliography}[1]{%
 \oldbibliography{#1}%
\setlength{\itemsep}{2pt}%
}
\let\OLDthebibliography\thebibliography
\renewcommand\thebibliography[1]{
  \OLDthebibliography{#1}
  \setlength{\parskip}{0.7pt}
  \setlength{\itemsep}{0.7pt plus 0.7ex}
 }
\newtheorem{theorem}{Theorem}
\newtheorem{lemma}{Lemma}
\title{Classical Mereology is Axiomatizable Using \\
Primitive Fusion in Two-Sorted Logic}
\author{Marcin Łyczak}
\date{\small{The work contains my axiomatization of \textit{Classical Mereology} with a primitive notion of \textit{mereological fusion}. I presented this work, along with all the necessary proofs, to Achille Varzi in a series of seminars in November 2022 during my scholarship at Columbia University, financed by the University of Warmia and Mazury (POWR.03.05.00-00-Z310/17).}}
\begin{document}
\maketitle
\begin{abstract}
\textit{Mereological fusion}, also known as \textit{composition} and \textit{sum}, was originally used by me as a primitive notion to axiomatize \textit{Extensional Mereology} with \textit{atoms} in \cite{Ly22}. Here, I extend this idea to axiomatize \textit{General Extensional Mereology}, also called \textit{Classical Mereology}, which is neutral regarding the existence of atoms. I give a proof that classical mereology is axiomatizable using primitive mereological fusion within the framework of two-sorted logic, as I announced in \cite{Ly22}. The use of the primitive notion of fusion instead of primitive notion of  \textit{part} was considered by G. Leibniz \cite[50]{CoVa21}, S. Leśniewski \cite[CCLXIV, CCLXIV]{Le92}, K. Fine \cite{Fi10}, J. Ketland, and T. Schindler \cite{KeSh16}, and S. Kleishmid \cite{Kl17}. C. Lejewski formulated mereology using a single axiom with primitive mereological sum \cite[222]{Sb84}. However, Lejewski's theory is formulated in non-classical language of Leśniewski's ontology and contains complicated quantification over function symbols, including quantification on \textit{part of} and \textit{sum of}. Lejewski's approach is not expressible in modern mereologies. The presented theory is the first contemporary axiomatization of classical mereology in two-sorted logic with primitive mereological fusion.
\end{abstract}
\section{Two-sorted logic framework for both mereologies}
The language of logic consists of variables of two sorts. The first for individuals ($x, y, z, ...$), and the second for pluralities ($zz, yy, zz, ...$). The predicate $\prec$, read as \textit{is one of}, is applied to individuals and pluralities, respectively. We use first-order identity $=$ between variables of the first sort. We use notation from \cite{FlLi21} because it is often used in mereological considerations that cannot be expressed in first-order logic.
The considered theories are expressed within a two-sorted logic framework (see \cite{MaAr22}) that includes axioms for first-order identity, for the first sort, and the unrestricted comprehension schema \cite[285]{Ma05}. Thus, we allow for the existence of an empty name in the case of the second sort. We take
the following instantiations of the comprehension schema, and definitions:
\begin{align*}
\tag{$\I$}\label{I} x\prec \I y &\leftrightarrow x=y,\\
\tag{$\cup$}\label{cup} x\prec yy\cup zz 
 &\leftrightarrow  x\prec yy \lor x\prec zz,\\
\tag{$\cap$}\label{cap} x\prec yy\cap 
 zz &\leftrightarrow  x\prec yy \land x\prec zz,\\
   \tag{$\are$}\label{are} xx\are yy & \leftrightarrow \forall z (z\prec xx \to z \prec yy),\\
\tag{$\approx$}\label{=}  xx\approx yy & \leftrightarrow  xx\are yy \land yy\are x.
\end{align*}
\section{Axiomatisation of Classical Mereology with primitive fusion}
The only primitive notion of the theory is a predicate $F$ representing \textit{mereological fusion}, which we also call \textit{mereological composition}. We use the defined function symbol $\UInt$ read \textit{components of}. The second sort term $\UInt xx$ will be defined using the composition as all objects that form wholes that are $xx$.
We take the following specific axioms for $F$:
\begin{gather}
 \tag{$\exists_{\s}$}\label{esum} \exists x (x\prec zz)\to \exists y F_{zz}y,\\
 \tag{$\approx_{F}$}\label{F=}  F_{xx}z\land xx\approx yy \to F_{yy}z,\\
   \tag{$\mathsf{ext}_{\s}$}\label{ext} \s_{zz}x \land \s_{yy}x \land \s_{uu\cup zz}v  \to  \s_{uu\cup yy}v,\\
      \tag{$\mathtt{id}_{\s}$}\label{fs} \s_{\I{y}}x \to x=y,\\
   \tag{$\mathtt{comp}_{\s}$} \label{comp}  \s_{zz\cup\I x}y\land \s_{zz}y \to \exists _{vv\are \UInt zz} (\s_{vv}x\land \exists z(z\prec vv)),\\
  \tag{$\mathtt{wsp}_{\s}$}\label{WSP}  \notag \s_{\I x \cup \I y}y \land x\not = y \to \exists_{z \prec \UInt \I y} \neg \exists u (\s_{\UInt \I x\cap \UInt \I z }u),
\end{gather}
where $\UInt zz$ is defined by an instantiation of the comprehension schema as
\begin{equation}
\tag{$\mathtt{df}\UInt_{F}$}\label{defUcpn} x\prec \UInt zz  \leftrightarrow  \exists_{z\prec zz}\exists yy(\s_{yy}z\land x \prec yy).
\end{equation}
We briefly comment on axiomatization with the primitive predicate $F$.
\eqref{esum} expresses Leśniewski's idea that there is a mereological fusion of any objects. \eqref{F=} describes a indiscernibility of identicals by $F$. \eqref{ext} is a version of extensionality, which states that if some $x$ is a fusion of $zz$ and fusion of $yy$ ($zz$ and $yy$ are possibly different), then both $zz$ and $yy$ can be substituted for each other when creating various fusions, as long as at least one of them is required. \eqref{fs} expresses the idea that the fusion of all objects identical to $y$ does not create a new individual; in other words, the fusion is identical to $y$. In mereology with part as primitive, \eqref{fs} is an obvious consequence of basic properties of being a part, and definition of fusion, which was noted already by Leśniewski.
 \eqref{comp} describes the constructiveness of composition: if $y$ is the fusion of $zz$ and $x$, but $x$ is unnecessary, then $x$ can be composed of some $yy$ that forms some 
wholes belonging to $zz$. An informal version of \eqref{WSP} was considered by S. Kleinschmidt in \cite[13]{Kl17}. \eqref{WSP} is closely connected with the so-called \textit{weak supplementation principle}, which we will introduce shortly.\smallskip\\
We denote the set of mereology with primitive fusion as
$$\mathsf{GEM}^{\mathsf{pl}}_F= \eqref{esum}+\eqref{F=}+ \eqref{fs}+\eqref{ext}+\eqref{comp}+\eqref{WSP}.$$
In $\mathsf{GEM}^{\mathsf{pl}}_{F}$, we define the notion of being a part as
\begin{equation}
\tag{$\mathtt{df}\ing_{\s}$}\label{dfing} \ing xy \leftrightarrow \exists zz(F_{zz} y \land x\prec zz).
\end{equation}
The counterpart of this definition was a theorem in the original Leśniewski's mereology. Leśniewski proved it and noted that this shows that the mereological fusion may be a primitive notion of mereology, instead of the primitive notion of mereological part \cite{Le92}:
\begin{quote} 
\begin{small}
THEOREM CCLXIV. $P$ is an ingredient of object $Q$, when and only when, for some $a$, $Q$ is the sum of objects $a$, and $P$ is $a$.\smallskip\\
Theorems $[...]$, CCLXIV, $[...]$ show that instead of such terms as `part' and `ingredient' I could choose as the fundamental term of my `general theory of sets', any of the terms: $[...]$, `class', $[...]$.
\end{small}
\end{quote}
\section{Axiomatization of Classical Mereology with primitive part}
We use nomenclature from \cite{Va16}. Axioms of the so-called \textit{Classical Mereology} (see \cite{Ho08}), called also \textit{General Extensional Mereology}, we express in the 
two-sorted logic are:
\begin{gather}
    \tag{$\mathtt{as}_{\pr}$} \label{aspr}  \pr xy\to \neg  \pr y x,\\
\tag{$\mathtt{trans}_{\pr}$} \label{transpr} \pr xy \land  \pr yz \to  \pr xz,\\
   \tag{$\exists_{F}$}\label{eFP} \exists x (x\prec zz)\to \exists y F_{zz}y,\\
      \tag{$\mathtt{fun}_{F}$}\label{fsum} F_{zz}x \land F_{zz}y \to x=y,
\end{gather}
where
\begin{gather}
   \tag{$\mathtt{df}F_{P}$} \label{defsum} F_{zz}x  \leftrightarrow  \forall_{y\prec zz} (\ing zx) \land \forall y(\ing y x \to \exists_{v\prec zz} (O v y )),\\
    \tag{$\mathtt{df}\ing_{\pr}$}\label{ing/pr} \ing xy \leftrightarrow\pr xy \lor x=y,\\
    \tag{$\mathtt{df}O$}\label{O} O xy\leftrightarrow \exists z(\ing z x\land \ing zy).
    \end{gather}
To characterize mereology it is often chosen primitive notion of \textit{being a part}, in an inclusive sense, with different axioms:
\begin{gather}
    \tag{$\mathtt{ref}_{\ing}$} \label{ring} \ing xx,\\
    \tag{$\mathtt{antis}_{\ing}$} \label{antising} \ing xy \land \ing yx \to x=y,\\
\tag{$\mathtt{trans}_{\ing}$} \label{transing} \ing xy \land \ing yz \to \ing x z,
\end{gather}
and $\pr$ is defined as
\begin{equation}
\tag{$\mathtt{df}\pr_{\ing}$} \label{defpring}
   \pr xy \leftrightarrow\ing xy \land x\not =y.
\end{equation}
It as a well-known fact that 
$$\eqref{ring}+\eqref{antising}+
\eqref{transing}+\eqref{defpring}=\eqref{aspr}+\eqref{transpr} +\eqref{ing/pr}.$$
Thus, from a formal perspective, the choice of the two primitives doesn't make much difference. However, for convenience, $P$ is often preferred.\smallskip\\
We denote the set of theses of classical mereology by
$$\mathsf{GEM}^{\mathsf{pl}}_{P}=\eqref{ring}+\eqref{antising}+\eqref{transing}+\eqref{esum}+\eqref{fsum}.$$
\section{Definitional equivalence of  \texorpdfstring{$\mathsf{GEM}^{\mathsf{pl}}_{P}$}{LG} and \texorpdfstring{$\mathsf{GEM}^{\mathsf{pl}}_{F}$}{LG}  }
\begin{lemma}
In $\mathsf{GEM}^{\mathsf{pl}}_{F}$ the following formulas are provable:
\begin{gather}
    \tag{$F_{\I x}x$}\label{FIx} F_{\I x}x,\\
    \tag{$P_{F}2$}\label{PP} \ing xy \leftrightarrow F_{\I x\cup \I y}y.
\end{gather}
\end{lemma}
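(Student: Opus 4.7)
The first claim I prove directly from the existence and uniqueness axioms for singleton pluralities. By \eqref{I} we have $x \prec \I x$, so \eqref{esum} delivers some $w$ with $F_{\I x}w$; then \eqref{fs}, read with its $y$ set to $x$ and its $x$ set to $w$, gives $w = x$, whence $F_{\I x}x$.

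For the biconditional $\ing xy \leftrightarrow F_{\I x \cup \I y}y$, the right-to-left direction is immediate from \eqref{dfing}: since $x \prec \I x \cup \I y$ by \eqref{I} and \eqref{cup}, the plurality $\I x \cup \I y$ itself witnesses $\ing xy$.

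The left-to-right direction is the interesting part. Unpacking \eqref{dfing}, I fix some $zz$ with $F_{zz}y$ and $x \prec zz$. The plan is a two-stage substitution that first enlarges the fusion-plurality by the redundant item $x$, and then shrinks it down to $\I y$ using extensionality. From $x \prec zz$ one checks $zz \approx \I x \cup zz$, so \eqref{F=} converts $F_{zz}y$ into $F_{\I x \cup zz}y$. Now apply \eqref{ext} with the fused individual taken to be $y$, its $zz$ set to our $zz$, its $yy$ set to $\I y$, its $uu$ set to $\I x$, and its $v$ set to $y$. The three required hypotheses are $F_{zz}y$, $F_{\I y}y$ (the first part of the lemma applied with $x := y$), and $F_{\I x \cup zz}y$; all are in hand, and the axiom yields $F_{\I x \cup \I y}y$.

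The only subtle point is the choice of instantiation in \eqref{ext}: the axiom requires two distinct fusion-representations of the same individual, and the role of $F_{\I y}y$ from the first part of the lemma is precisely to license the replacement of $zz$ by $\I y$ inside the union $\I x \cup zz$. Neither \eqref{comp} nor \eqref{WSP} is needed for this lemma.
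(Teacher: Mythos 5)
Your proof is correct and follows essentially the same route as the paper's: \eqref{esum} plus \eqref{fs} for \eqref{FIx}, the witness $\I x\cup\I y$ with \eqref{dfing} for the right-to-left direction, and for the left-to-right direction the same two-step argument via $zz\approx\I x\cup zz$, \eqref{F=}, and \eqref{ext} instantiated with $\I y$ for $yy$, $\I x$ for $uu$, and $F_{\I y}y$ from \eqref{FIx} as the third premise. No gaps to report.
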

\begin{proof}
To prove \eqref{FIx} we use the fact that $x\prec \I x$ and so from axiom \eqref{esum} we have $\exists y F_{\I x} y$. Thus, using classical logic and \eqref{fs}, we obtain $\exists y (F_{\I x} y \land x=y)$, and so $F_{\I x}x$. 
To prove `$\leftarrow$' of \eqref{PP} we assume $F_{\I x\cup \I y}y$ and then we just use the fact that $x\prec \I x\cup \I y$ and definition of \eqref{dfing}. 
To prove `$\to$' of \eqref{PP} we assume $\ing xy$ and from definition \eqref{dfing} we have $F_{aa}y\land x\prec aa$, for some $aa$. Having $x\prec aa$ we obtain from logic  $aa\approx \I x \cup aa$, so using $F_{aa}y$ and \eqref{F=} we get $F_{\I x\cup aa}y$. So we have $F_{\I x\cup aa}y$, $F_{aa}y$, and $F_{\I y}y$ from \eqref{FIx}, thus we use \eqref{ext} and obtain $F_{\I x\cup \I y}y$.
\end{proof}
\begin{lemma} \label{lemmartrant}
In $\mathsf{GEM}^{\mathsf{pl}}_{F}
+\eqref{dfing}$ formulas \eqref{ring}, \eqref{antising}, and \eqref{transing} are provable.
\end{lemma}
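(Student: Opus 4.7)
The plan is to prove the three parthood axioms in the order \eqref{ring}, \eqref{transing}, \eqref{antising}, using the previous lemma (which supplied $F_{\I x}x$ and the characterization $\ing xy \leftrightarrow F_{\I x\cup \I y}y$) so that parthood can always be traded for a statement about fusions.

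\medskip

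For \eqref{ring}, I would simply observe $x\prec \I x$ and combine this with $F_{\I x}x$ from the previous lemma, so that \eqref{dfing} immediately yields $\ing xx$.

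\medskip

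For \eqref{transing}, assume $\ing xy$ and $\ing yz$. By the previous lemma I rewrite these as $F_{\I x\cup \I y}y$ and $F_{\I y\cup \I z}z$. I also have $F_{\I y}y$. The strategy is to feed the triple $(F_{\I y}y,\ F_{\I x\cup \I y}y,\ F_{\I z\cup \I y}z)$ into the extensionality axiom \eqref{ext}, taking the ``free'' plurality $uu$ in \eqref{ext} to be $\I z$. This substitutes $\I x\cup \I y$ for $\I y$ inside the fusion of $z$, yielding $F_{\I z\cup \I x\cup \I y}z$. Since $x\prec \I z\cup \I x\cup \I y$, the definition \eqref{dfing} gives $\ing xz$. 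Minor bookkeeping involving commutativity of $\cup$ is handled by \eqref{F=}, noting that $\I z\cup \I x\cup \I y\approx \I x\cup \I y\cup \I z$.

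\medskip

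For \eqref{antising}, I argue by contradiction: assume $\ing xy$, $\ing yx$, and $x\neq y$. From $\ing xy$ and the previous lemma I obtain $F_{\I x\cup \I y}y$, so \eqref{WSP} applies and yields some $z\prec \UInt\I y$ with $\neg\exists u\, F_{\UInt \I x\cap \UInt \I z}u$. Unpacking \eqref{defUcpn} and using $w\prec \I y\leftrightarrow w=y$, the condition $z\prec \UInt\I y$ reduces exactly to $\ing zy$, and analogously $v\prec \UInt\I x\cap \UInt\I z$ reduces to $\ing vx\land \ing vz$. Hence \eqref{esum} forces the collection $\{v:\ing vx\land \ing vz\}$ to be empty. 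But $\ing zy$ together with the hypothesis $\ing yx$ gives $\ing zx$ by the transitivity just proved, and $\ing zz$ by reflexivity, so $z$ itself populates that collection, yielding a contradiction, hence $x=y$.

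\medskip

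The only real obstacle is in the antisymmetry step: the axiom \eqref{WSP} is phrased in terms of the quite heavy second-sort term $\UInt \I x\cap \UInt \I z$, so the crucial piece of bookkeeping is the reduction $v\prec \UInt \I w\leftrightarrow \ing vw$, which follows from unfolding \eqref{defUcpn} on singleton pluralities $\I w$ and using $F_{\I w}w$ to guarantee the existential witness. Once this reduction is in place, \eqref{esum} and the previously established reflexivity and transitivity immediately close the argument.
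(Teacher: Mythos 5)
Your proofs of \eqref{ring} and \eqref{transing} coincide with the paper's: reflexivity from $F_{\I x}x$ and \eqref{dfing}, and transitivity by instantiating \eqref{ext} with $uu=\I z$ on the triple $F_{\I y}y$, $F_{\I x\cup\I y}y$, $F_{\I z\cup\I y}z$ (the commutativity bookkeeping via \eqref{F=} that you flag is exactly what is needed). For \eqref{antising}, however, you take a genuinely different route, and it works. The paper stays inside the ``fusion algebra'': from $\ing xy$ and $\ing yx$ it gets $F_{\I x\cup\I y}x$ and $F_{\I x\cup\I x\cup\I y}y$, applies \eqref{ext} with $\I x\cup\I y/zz$, $\I x/yy$, $\I x/uu$, $y/v$ to collapse to $F_{\I x}y$, and then invokes the identity axiom \eqref{fs}; this uses only \eqref{PP}, \eqref{F=}, \eqref{ext}, \eqref{fs} and is independent of the order in which the three properties are established. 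You instead assume $x\neq y$, fire \eqref{WSP} on $F_{\I x\cup\I y}y$, reduce $v\prec\UInt\I w$ to $\ing vw$ by unfolding \eqref{defUcpn} on singletons (your appeal to $F_{\I w}w$ here is not even needed, since \eqref{dfing} is itself the required existential), and contradict the non-existence of a fusion of $\UInt\I x\cap\UInt\I z$ by exhibiting $z$ itself via the just-proved transitivity and reflexivity together with \eqref{cap} and \eqref{esum}. Your argument is correct, but it spends heavier resources: it needs \eqref{WSP}, \eqref{esum}, and \eqref{defUcpn}, and it depends on transitivity and reflexivity having been proved first, whereas the paper's antisymmetry proof avoids all of these and showcases the role of \eqref{fs}; what your route buys is that it bypasses the somewhat delicate second instantiation of \eqref{ext} and makes visible how \eqref{WSP} alone already forbids two distinct objects from being mutual parts.
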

\begin{proof}
To prove \eqref{ring} we just use \eqref{FIx} and by \eqref{dfing} and $x\prec \I x$ we obtain $Px x$. To prove  \eqref{antising} we assume $\ing x y$ and $ \ing y x$. From this using \eqref{PP} and \eqref{F=} we obtain $F_{\I x \cup \I y}x$ and $F_{\I x\cup \I y}y$. From the latter with the use of logical thesis $\I x\cup \I y\approx\I x\cup \I x\cup \I y$  and \eqref{F=} we obtain $F_{\I x\cup \I x\cup \I y}y$. Now we take \eqref{ext} with: $\I x\cup \I y/zz, \I x/yy, \I x/uu, y/v$ and we obtain as $\mathsf{GEM}^{\mathsf{pl}}_{F}$ thesis the following
$$(\star)\colon F_{\I x\cup \I y}x \land F_{\I x}x \land F_{\I x\cup \I x\cup \I y} y\to F_{\I x\cup\I 
 x}y.$$
We have all predecessors of ($\star$) so we get $F_{\I x \cup \I x}y$, i.e $F_{\I x}y$ by $\I x \cup \I x\approx \I x$ and \eqref{F=}. Having $F_{\I x}y$ we just use \eqref{fs} and have $x=y$. To prove \eqref{transing} we assume $\ing xy\land \ing y z$. Using \eqref{PP} we obtain $F_{\I x\cup\I y}y\land F_{\I y\cup \I z}z$.
Now we use \eqref{ext} with: $\I y/zz$, $\I x \cup \I y/yy$, $\I z/uu$,  $y/x$, $z/v$ and we obtain as $\mathsf{GEM}^{\mathsf{pl}}_{F}$ thesis the following
$$(\star \star)\colon F_{\I y}y  \land F_{\I x\cup \I y}y \land F_{\I z\cup \I y} z \to F_{\I z\cup \I x \cup \I y}z.$$ $F_{\I x\cup\I y}y\land F_{\I y\cup \I z}z$ we already have and $F_{\I y}y$ we have as earlier from \eqref{FIx}, so we have all predecessors of $(\star \star)$, and thus we obtain $F_{\I z\cup \I x \cup \I y}z$, so $\ing xz$ directly from \eqref{dfing} and $ z\prec \I z\cup \I x \cup \I y$.
\end{proof}
Next we show uniqueness of fusion in $\mathsf{GEM}^{\mathsf{pl}}_{F}$. 
\begin{lemma}
In $\mathsf{GEM}^{\mathsf{pl}}_{F}+\eqref{dfing}$ formula \eqref{fsum} is provable.
\end{lemma}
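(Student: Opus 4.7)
The plan is to derive $\ing xy$ and $\ing yx$ from the hypothesis $F_{zz}x \land F_{zz}y$, and then close the argument by the antisymmetry \eqref{antising} already established in Lemma \ref{lemmartrant}. By \eqref{PP} these two goals reduce to deriving $F_{\I x \cup \I y}y$ and $F_{\I x \cup \I y}x$, so the whole task amounts to swapping pluralities inside fusion formulas.

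The machinery for this swapping is \eqref{ext}, and I would invoke it twice, bridged in each case by the singleton fusions $F_{\I x}x$ and $F_{\I y}y$ supplied by \eqref{FIx}. First, I would splice $\I x$ into the plurality fused by $y$: instantiating \eqref{ext} so that its conclusion is $F_{zz \cup \I x}y$, the premises become $F_{zz}x$, $F_{\I x}x$ and $F_{zz \cup zz}y$, the last of which is $F_{zz}y$ up to the idempotence $zz \approx zz \cup zz$ and \eqref{F=}. Second, with $\I x$ now present in the plurality for $y$, a further application of \eqref{ext} lets me replace $zz$ by $\I y$: from $F_{zz}y$, $F_{\I y}y$ and $F_{\I x \cup zz}y$ (obtained from the previous step by commutativity of $\cup$ and \eqref{F=}) I obtain $F_{\I x \cup \I y}y$, i.e.\ $\ing xy$ by \eqref{PP}. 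Interchanging the roles of $x$ and $y$ in the same two steps yields $\ing yx$, after which \eqref{antising} delivers $x=y$.

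I do not foresee a genuine obstacle; the argument is essentially bookkeeping around \eqref{ext}. The one subtle point worth flagging is that a single application of \eqref{ext} cannot eliminate $zz$ in favour of $\I x$ inside $F_{zz}y$ directly, because \eqref{ext} only allows swapping a plurality already sitting in the fusion list. The auxiliary plurality $\I x$ must therefore first be introduced alongside $zz$, which is precisely the purpose of the intermediate formula $F_{zz \cup \I x}y$; the axioms \eqref{comp} and \eqref{WSP} are not needed here at all.
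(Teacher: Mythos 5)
Your proof is correct and follows essentially the same route as the paper: splice $\I x$ into the plurality fused by $y$ via \eqref{ext}, \eqref{FIx} and the idempotence $zz\approx zz\cup zz$ with \eqref{F=}, do the symmetric step for $\ing yx$, and close with \eqref{antising}. The only (harmless) difference is that you take a second application of \eqref{ext} to reach $F_{\I x\cup\I y}y$ and invoke \eqref{PP}, whereas the paper reads $\ing xy$ off directly from $F_{zz\cup\I x}y$ and $x\prec zz\cup\I x$ via \eqref{dfing}, saving that extra step.
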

\begin{proof}
We assume $F_{zz}x$ and $F_{zz}y$. From $F_{zz}x$, $F_{\I x}x$, and $F_{zz\cup zz}y$ (taken from $F_{zz}y$, $zz\approx zz\cup zz$ and \eqref{F=}) using \eqref{ext} we have $F_{zz\cup \I x}y$ thus $\ing xy$ by \eqref{dfing}. Analogically from  $F_{zz}y$, $F_{\I y}y$, and $F_{zz\cup zz}x$ using \eqref{ext} we have $F_{zz\cup \I y}x$ thus $\ing yx$ by \eqref{dfing}. So $\ing xy$ and $\ing yx$. Therefore, using proved \eqref{antising} we get $x=y$.  
\end{proof} 
Now we show that fusion axiomatized in $\mathsf{GEM}^{\mathsf{pl}}_{F}$ is fusion in Leśniewski's sense.
\begin{lemma}\label{cltosum}
In $\mathsf{GEM}^{\mathsf{pl}}_{F}+\eqref{dfing}$ formula $F_{zz}x \to  \forall_{y\prec zz} (\ing yx) \land \forall y(\ing yx \to  \exists_{v\prec zz}(O  v y))$ is provable.
 \end{lemma}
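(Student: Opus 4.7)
My plan is to prove the two conjuncts separately, taking $F_{zz}x$ as a standing hypothesis. The first conjunct $\forall_{y\prec zz}(\ing yx)$ is essentially definitional: given $y\prec zz$, the pair $F_{zz}x,\; y\prec zz$ directly witnesses the existential on the right of \eqref{dfing}, hence $\ing yx$.

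For the substantive second conjunct $\forall y(\ing yx \to \exists_{v\prec zz}(Ovy))$, assume $\ing yx$. I would first rewrite this, using \eqref{PP} from the preceding lemma, as $F_{\I y \cup \I x}x$. Then I would instantiate \eqref{ext} with $\I x/zz$, $zz/yy$, $\I y/uu$, $x/v$ and apply it to the triple $F_{\I x}x$ (which is \eqref{FIx}), $F_{zz}x$ (hypothesis), and $F_{\I y \cup \I x}x$; after normalising by \eqref{F=} along the equivalence $\I y \cup zz \approx zz \cup \I y$, this yields $F_{zz \cup \I y}x$. Now I would invoke \eqref{comp} with its variable $x$ renamed to our $y$ (the element being ``split off'') and its $y$ renamed to our $x$ (the common fusion): from $F_{zz \cup \I y}x$ and $F_{zz}x$ one obtains some $vv$ with $vv \are \UInt zz$, $F_{vv}y$, and $\exists z(z \prec vv)$. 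Pick any such $z_0 \prec vv$; then $z_0 \prec \UInt zz$, and unfolding \eqref{defUcpn} produces some $w \prec zz$ and some $ww$ with $F_{ww}w$ and $z_0 \prec ww$, giving $\ing{z_0}{w}$ by \eqref{dfing}. Simultaneously, $z_0 \prec vv$ together with $F_{vv}y$ gives $\ing{z_0}{y}$ by \eqref{dfing}. Hence $z_0$ is a common ingredient, $Owy$ follows from \eqref{O}, and $w \prec zz$ supplies the required witness.

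The main obstacle is the extensional rewriting step: choosing the correct instantiation of \eqref{ext} so that the ``two-element'' fusion $\I y \cup \I x$ is converted into the ``$zz$-shifted'' fusion $zz \cup \I y$ suitable for \eqref{comp}. Once that is done, \eqref{comp} almost produces the answer; the remaining work is only to peel off \eqref{defUcpn} so that the non-emptiness existential on $vv$ is translated into a bona fide element of $zz$ that overlaps $y$. Everything else is direct appeal to \eqref{FIx}, \eqref{PP}, \eqref{dfing}, and \eqref{O}.
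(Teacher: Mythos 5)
Your proposal is correct and follows essentially the same route as the paper's own proof: first conjunct directly from \eqref{dfing}, then \eqref{PP} and \eqref{ext} (with \eqref{FIx}) to obtain $F_{zz\cup\I y}x$, then \eqref{comp} to split off $y$, and finally \eqref{defUcpn}, \eqref{dfing}, and \eqref{O} to extract the overlapping witness in $zz$. Your explicit instantiation of \eqref{ext} and the commutativity step via \eqref{F=} merely spell out what the paper leaves implicit.
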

 \begin{proof}
We assume $F_{zz}x $. From this and \eqref{dfing} we directly have $\forall_{y\prec zz} \ing yx$. We assume $\ing yx$, for any $y$ and by \eqref{PP} we obtain $F_{\I y\cup \I x}x$. From this, \eqref{FIx}, and assumed $F_{zz}x$ we obtain $F_{zz\cup\I y}x$ by \eqref{ext}. Thus we have $F_{zz}x $ and $F_{zz\cup\I y}x$. We use \eqref{comp} and there is $aa\are \UInt zz$ such that $F_{aa}y$ and $a\prec aa$ for some $a$. From $aa\are \UInt zz$ and $a\prec aa$ we have $a\prec \UInt zz$ thus applying \eqref{defUcpn} for some $v\prec zz$ and $yy$ we have $F_{yy}v\land a\prec yy$. So using the latter and \eqref{dfing}, we have $\ing a v$. We also know that $F_{aa}y$ and $a\prec aa$, so $Pay$. Thus applying \eqref{O} we obtain $O v y$, and we know that $v\prec zz$, which ends the proof. 
\end{proof}
To prove the converse implication we need the following Lemma
\begin{lemma}\label{FUIx}
In $\mathsf{GEM}^{\mathsf{pl}}_{F}+\eqref{dfing}$ formula $F_{\UInt \I x}x$ is provable.
 \end{lemma}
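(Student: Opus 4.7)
The plan is to use \eqref{esum} to pick a fusion $w$ of $\UInt\I x$ and then show $w=x$ via the supplementation axiom \eqref{WSP}.

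First I would verify that $x\prec \UInt\I x$: taking the witnesses $z:=x\prec \I x$ and $yy:=\I x$, together with \eqref{FIx}, satisfies the right-hand side of \eqref{defUcpn}. Then \eqref{esum} yields some $w$ with $F_{\UInt\I x}w$, and applying the first clause of Lemma~\ref{cltosum} to $x\prec\UInt\I x$ gives $\ing xw$, i.e.\ $F_{\I x\cup \I w}w$ by \eqref{PP}.

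Next I would aim at $w=x$ by contradiction: assume $x\neq w$ and invoke \eqref{WSP} to obtain some $z\prec \UInt\I w$ such that no $u$ is a fusion of $\UInt\I x\cap \UInt\I z$. Unpacking $z\prec \UInt\I w$ through \eqref{defUcpn} (with $\I w$'s only member being $w$) gives $\ing zw$, so the second clause of Lemma~\ref{cltosum} applied to $F_{\UInt\I x}w$ produces some $v\prec \UInt\I x$ with $Ovz$ — equivalently, by \eqref{O}, a common ingredient $y$ of $v$ and $z$. Then $v\prec \UInt\I x$ means $\ing vx$, so transitivity of $\ing$ (Lemma~\ref{lemmartrant}) upgrades $\ing yv$ to $\ing yx$, hence $y\prec \UInt\I x$. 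Together with $\ing yz$, i.e.\ $y\prec \UInt\I z$, this puts $y$ inside $\UInt\I x\cap \UInt\I z$, whereupon \eqref{esum} produces a fusion of that plurality, contradicting what \eqref{WSP} provided. Therefore $w=x$ and $F_{\UInt\I x}x$.

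The hard part is spotting the internal contradiction: \eqref{WSP} tells us $\UInt\I x\cap \UInt\I z$ has no fusion, but the second clause of Lemma~\ref{cltosum} applied to the very same $F_{\UInt\I x}w$ forces $z$ (an ingredient of $w$) to overlap an ingredient of $x$, and that overlap — once pushed through transitivity of $\ing$ — becomes a genuine member of the forbidden intersection. Every other step is routine unfolding of \eqref{defUcpn}, \eqref{dfing}, \eqref{O}, and \eqref{PP}.
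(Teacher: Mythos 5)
Your argument is correct, and there is no circularity in citing Lemma~\ref{cltosum}, since it is proved before Lemma~\ref{FUIx} and independently of it; but your proof takes a genuinely different route in the decisive step. Both proofs share the outer skeleton: obtain a fusion of $\UInt\I x$ from \eqref{esum}, suppose it differs from $x$, feed $\s_{\I x\cup\I w}w$ and $x\neq w$ into \eqref{WSP}, and refute the resulting claim that $\UInt\I x\cap\UInt\I z$ has no fusion by exhibiting a member of that plurality and invoking \eqref{esum}. The difference lies in how that member is produced. The paper works directly from the axioms: it massages the \eqref{WSP} witness $a$ with \eqref{F=} and \eqref{ext} to obtain $\s_{\UInt\I x\cup\I a}y$, applies \eqref{comp} to get some $bb\are\UInt\UInt\I x$ with $\s_{bb}a$, and then unwinds two layers of \eqref{defUcpn} together with two uses of \eqref{transing} to place an element in $\UInt\I x\cap\UInt\I a$. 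You instead reuse the already-proved second clause of Lemma~\ref{cltosum} (applied to $\s_{\UInt\I x}w$ and the ingredient $z$), which packages exactly that \eqref{comp}-work, so a single overlap and one application of \eqref{transing} suffice; likewise you get $\s_{\I x\cup\I w}w$ from the first clause of Lemma~\ref{cltosum} together with \eqref{PP}, where the paper computes it via \eqref{ext} and \eqref{F=}. Your route is shorter and more modular; the paper's is self-contained at the level of the axioms and, along the way, records the slightly more general thesis $\forall y(\s_{\UInt\I x}y\to x=y)$ before appealing to existence.
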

 \begin{proof}
First, we show that $F_{\UInt \I x}y\to x=y$ is a thesis of  $\mathsf{GEM}^{\mathsf{pl}}_{F}$. We assume that $F_{\UInt \I x}y$ and $x\not =y$. From assumed  $F_{\UInt \I x}y$, and the fact that $\UInt \I x\approx \UInt \I x\cup \I x$ using \eqref{F=} we obtain that $F_{\UInt \I x\cup \I x}y$. The latter with $F_{\UInt \I x}y$, and 
$F_{\I y}y$ by \eqref{ext} gives $F_{\I x \cup \I y}y$. Having $F_{\I x \cup \I y}y$ and assumed $x\not = y$ we use \eqref{WSP} and we obtain that there is $a$ such that $a \prec \UInt \I y \land \neg \exists u (\s_{\UInt \I x\cap \UInt \I a }u)$. $a \prec \UInt \I y$ from definitions \eqref{defUcpn} and \eqref{I} gives that for some $aa$ we have $F_{aa}y\land a\prec aa$, so with the use of $aa\approx aa\cup \I a$ and \eqref{F=} we get $F_{aa\cup\I a}y$. Having $F_{\UInt \I x}y$ and $F_{aa}y$ and $F_{aa\cup\I a}y$ we use \eqref{ext} and obtain $F_{\UInt \I x\cup \I a}y$. So we have $F_{\UInt \I x\cup \I a}y$ and $F_{\UInt \I x}y$ thus we apply \eqref{comp} and we obtain that there are $bb$ and $b$ such that $b\prec bb$ and $bb\are \UInt \UInt \I x$ and $F_{bb}a$. Thus we have $b\prec \UInt \UInt \I x$ and $(i)\colon\ing ba$ from \eqref{dfing}, $F_{bb}a$, and $b\prec bb$. From $b\prec \UInt \UInt \I x$ we have that there are $cc$ and $c$ such that $b\prec cc$, $c\prec \UInt \I x$, and $F_{cc}c$, so $(ii)\colon Pbc$ using \eqref{dfing}. Now from $c\prec \UInt \I x$ we have that there is $dd$ and such that  $F_{dd}x\land c\prec dd$, so $(iii)\colon Pcx$ by \eqref{dfing}. Thus from $(ii)$ and $(iii)$ using \eqref{transing} we obtain $Pbx$. So, we have $Pbx$ and $Pba$ from (i) and using definitions \eqref{dfing} and \eqref{defUcpn} we get $b\prec \UInt \I x\land b\prec \UInt \I a$, thus $b\prec \UInt \I x\cap \UInt \I a$. Now we can use the axiom of the existence of fusion \eqref{esum} which gives $\exists u (\s_{\UInt \I x\cap \UInt \I a }u)$ which is false what we showed earlier, so we finally have $\forall y (F_{\UInt \I x}y\to x=y)$. We know that $x\prec \UInt \I x$ so we again use \eqref{esum} and we have $F_{\UInt \I x}d$ for some $d$, but then we use already proved thesis and get $x=d$, so finally $F_{\UInt \I x}x$.
 \end{proof}
 \begin{lemma}\label{sumtocl}
In $\mathsf{GEM}^{\mathsf{pl}}_{F}+\eqref{dfing}$ formula $  \forall_{y\prec zz} (\ing yx) \land \forall y(\ing yx \to  \exists_{v\prec zz}(O  v y))\to F_{zz}x$ is provable.
\end{lemma}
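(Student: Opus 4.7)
The plan is to reduce $F_{zz}x$ to an identity by separately producing and identifying the fusion of $zz$. I first note that $zz$ is nonempty by applying the second conjunct of the hypothesis to $y := x$ together with $\ing xx$ from Lemma~\ref{lemmartrant}; \eqref{esum} then yields a witness $u$ with $F_{zz}u$, and it remains to show $u = x$. I would reach this in two stages: first establish $\ing u x$ by extensionality, then rule out the proper-part case $u \neq x$ via weak supplementation.

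For $\ing u x$, I push the fusion $F_{\UInt \I x}x$ from Lemma~\ref{FUIx} through \eqref{ext}. The first conjunct of the hypothesis yields $zz \are \UInt \I x$, whence $zz \cup \UInt \I x \approx \UInt \I x$, so \eqref{F=} gives $F_{zz \cup \UInt \I x}x$. Now applying \eqref{ext} with the twin fusions $F_{zz}u$ and $F_{\I u}u$ (the latter being \eqref{FIx}) and with $\UInt \I x$ as the extra context, I obtain $F_{\UInt \I x \cup \I u}x$. Since $u \prec \UInt \I x \cup \I u$, Lemma~\ref{cltosum} applied to this last fusion yields $\ing u x$.

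For the identification $u = x$ I argue by contradiction: assume $u \neq x$. Then $\ing u x$ together with \eqref{PP} gives $F_{\I u \cup \I x}x$, and \eqref{WSP} produces some $z$ with $z \prec \UInt \I x$ (i.e.\ $\ing z x$) such that $\UInt \I u \cap \UInt \I z$ admits no fusion, and hence, by the contrapositive of \eqref{esum}, no element whatsoever; in other words, $u$ and $z$ share no ingredient. But the second conjunct of the hypothesis applied to $z$ delivers some $v \prec zz$ overlapping $z$, and Lemma~\ref{cltosum} applied to $F_{zz}u$ gives $\ing v u$; the common ingredient of $v$ and $z$ witnessing $Ovz$ is then an ingredient of $u$ by \eqref{transing} from Lemma~\ref{lemmartrant}, contradicting the emptiness of $\UInt \I u \cap \UInt \I z$. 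Hence $u = x$, and $F_{zz}u$ becomes the desired $F_{zz}x$.

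The main obstacle, I expect, is spotting the right use of \eqref{ext} in the step $\ing u x$: the naive strategy of trying to derive $F_{zz}x$ directly from $F_{\UInt \I x}x$ via a $\approx$-equivalence between $zz$ and $\UInt \I x$ is doomed, since the hypothesis only gives $zz \are \UInt \I x$ and not the converse. The key trick is that \eqref{ext} can transplant $\I u$ into a fusion equal to $x$ without equating the two pluralities. Once this is seen, the remainder is the standard weak-supplementation argument that the candidate fusion must coincide with $x$.
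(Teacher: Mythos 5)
Your proposal is correct and takes essentially the same route as the paper's own proof: both establish nonemptiness of $zz$ via reflexivity and the second conjunct, obtain a candidate fusion by \eqref{esum}, transfer $F_{\UInt \I x}x$ (Lemma \ref{FUIx}) through $zz\are \UInt \I x$, \eqref{F=} and \eqref{ext}, and then force identity with $x$ by \eqref{WSP} and a contradiction built from the second conjunct and transitivity. The only differences are cosmetic: you package the paper's second application of \eqref{ext} inside \eqref{PP}, and you run the final contradiction by converting ``no fusion'' into ``no element'' via the contrapositive of \eqref{esum}, where the paper instead exhibits an element of the intersection and applies \eqref{esum} forward.
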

\begin{proof}
We assume $\forall_{y\prec zz} (\ing yx) \land \forall y(\ing yx \to  \exists_{v\prec zz}(O  v y))$ from $\forall_{y\prec zz} (\ing yx)$ and definitions \eqref{dfing} and \eqref{defUcpn} we obtain $zz\are \UInt \I x$, thus $\UInt \I x\approx  zz\cup \UInt \I x$. From Lemma \ref{FUIx} we know that $F_{ \UInt \I x}x$, thus using  $\UInt \I x\approx  zz\cup \UInt \I x$ and \eqref{F=} we obtain $F_{zz\cup \UInt \I x}x$.
From $\forall y(\ing yx \to  \exists_{v\prec zz}(O  v y))$ and proved reflexivity of $P$ \eqref{ring} we obtain that $\exists v (v\prec zz)$. From the latter and axiom of the existence of fusion \eqref{esum} we have that $F_{zz}a$ for some $a$. So, we have $F_{zz}a$ and $F_{\I a}a$ and $F_{zz\cup \UInt \I x}x$ thus using \eqref{ext} we obtain 
$F_{\I a\cup \UInt \I x}x$ and we have $F_{\UInt \I x}x$ and $F_{\I x}x$ so again using \eqref{ext} we have $F_{\I a\cup \I x}x$. Now, we additionally assume indirectly that $x\not = a$. From \eqref{WSP} for some $b$ we obtain that $b\prec \UInt \I x \land \neg \exists u (\s_{\UInt \I b\cap \UInt \I a }u)$. From $b\prec \UInt \I x$ and definition \eqref{defUcpn} we obtain $\exists yy (F_{yy}x\land b\prec yy)$, i.e. $Pbx$ by \eqref{dfing}. Now we take assumption $\forall y(\ing yx \to  \exists_{v\prec zz}(O  v y))$ with $b/y$ and using $Pbx$ for some $c$ we obtain $c\prec zz \land O c b$. From $O  c b$ we obtain that for some $d$ we have $Pdc\land Pdb$. From $Pdc$ and $c\prec zz$ using \eqref{dfing} we have $d\prec \UInt zz$. Having $d\prec \UInt zz$ with the use of $F_{zz}a$ and \eqref{ext} we get $d\prec \UInt \I a$. From $Pdb$ we obtain $d\prec \UInt \I b$ by definitions \eqref{dfing} and \eqref{defUcpn}. Thus, we have  $d\prec \UInt \I b$ and $d\prec \UInt \I a$ and next $d\prec \UInt \I b \cap \UInt \I a$. The latter using \eqref{esum} gives $\exists u (\s_{\UInt \I b\cap \UInt \I a }u)$, which is false, so finally we obtain $x=a$. We know that $F_{zz}a$ so using $x=a$ we finally obtain $F_{zz}x$, what we wanted to prove.
\end{proof}
Thus using Lemmas \ref{lemmartrant}-\ref{cltosum} and \ref{sumtocl} we obtain
\begin{theorem}\label{PsubF}
$\mathsf{GEM}^{\mathsf{pl}}_{P}\subseteq  \mathsf{GEM}^{\mathsf{pl}}_{F}+\eqref{dfing}$
\end{theorem}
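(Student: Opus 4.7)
The plan is to verify that every axiom and definitional biconditional of $\mathsf{GEM}^{\mathsf{pl}}_{P}$ is already derivable in $\mathsf{GEM}^{\mathsf{pl}}_{F}+\eqref{dfing}$, after which the inclusion of the two deductive closures follows at once. Since the axiomatic inventory of $\mathsf{GEM}^{\mathsf{pl}}_{P}$ is $\eqref{ring}+\eqref{antising}+\eqref{transing}+\eqref{esum}+\eqref{fsum}$ together with the defining biconditionals \eqref{defsum}, \eqref{ing/pr}, and \eqref{O}, I would go through these one at a time.

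The quick items come first. The axiom \eqref{esum} is itself an axiom of $\mathsf{GEM}^{\mathsf{pl}}_{F}$, so nothing has to be proved for it; the ordering axioms \eqref{ring}, \eqref{antising}, and \eqref{transing} are furnished by Lemma \ref{lemmartrant}; and the uniqueness of fusion \eqref{fsum} is exactly the content of the lemma immediately following Lemma \ref{lemmartrant}. The overlap biconditional \eqref{O} may simply be adopted on the side of $\mathsf{GEM}^{\mathsf{pl}}_{F}+\eqref{dfing}$ as a fresh abbreviation, since $O$ does not otherwise occur in the primitive signature. The biconditional \eqref{ing/pr} is likewise routine once \eqref{ring} has been secured: unpacking $\pr$ via \eqref{defpring} as $\ing xy \land x \neq y$ reduces \eqref{ing/pr} to a classical tautology plus reflexivity of $\ing$.

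The substantive point is the defining biconditional \eqref{defsum}, which I would obtain as the conjunction of Lemmas \ref{cltosum} and \ref{sumtocl}: the first supplies the left-to-right direction $F_{zz}x \to \forall_{y\prec zz}\ing yx \land \forall y(\ing yx \to \exists_{v\prec zz}Ovy)$, while Lemma \ref{sumtocl} supplies the converse. Together they show that the primitive $F$ of $\mathsf{GEM}^{\mathsf{pl}}_{F}$ satisfies the Leśniewskian definition of sum that is in force on the $\mathsf{GEM}^{\mathsf{pl}}_{P}$ side, so the two theories are speaking about the same predicate.

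No genuine obstacle remains at the level of the theorem itself; the hard work has already been done inside the lemmas, notably the carefully tailored triple substitutions into \eqref{ext} in Lemma \ref{lemmartrant} and the combined use of \eqref{comp} and \eqref{WSP} in Lemma \ref{FUIx} to establish $F_{\UInt \I x}x$. The present theorem is then simply the act of assembling Lemmas \ref{lemmartrant}--\ref{sumtocl} and observing that no axiom or definition of $\mathsf{GEM}^{\mathsf{pl}}_{P}$ has been overlooked.
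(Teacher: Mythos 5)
Your proposal is correct and matches the paper's own argument: the theorem is obtained exactly by assembling Lemma \ref{lemmartrant} (for \eqref{ring}, \eqref{antising}, \eqref{transing}), the uniqueness lemma (for \eqref{fsum}), and Lemmas \ref{cltosum} and \ref{sumtocl} (for the two directions of \eqref{defsum}), with \eqref{esum} already being an axiom of $\mathsf{GEM}^{\mathsf{pl}}_{F}$. Your extra remarks on treating \eqref{O} and \eqref{ing/pr} as abbreviations are harmless bookkeeping that the paper leaves implicit.
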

Now we show converse dependency. 
\begin{lemma}
    In $\mathsf{GEM}^{\mathsf{pl}}_{P}$ the following formulas are provable:
\begin{gather}
\tag{$\mathtt{WSP}$} \label{wsp}  PP xy \to  \exists z (PP zy \land \neg O zx),\\
\tag{$F_{P}=\supr$} \label{sum=sup} F_{zz}x \leftrightarrow \supr_{zz}x,
\end{gather}
where $\supr$ is defined as
\begin{equation}
    \tag{$\mathtt{df Mub}$} \label{defsup} Mub_{zz}x \leftrightarrow \exists y (y\prec zz)\land \forall_{y\prec zz} (\ing yx) \land \forall y (\forall_{v\prec zz} (\ing vy)\to \ing xy).
\end{equation}
\end{lemma}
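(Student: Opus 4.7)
The plan is to establish \eqref{wsp} first and then leverage it to prove both directions of \eqref{sum=sup}. For \eqref{wsp}, I would assume $\pr xy$, so $\ing xy$ and $x\neq y$, and argue by contradiction: suppose $\forall z(\pr zy \to Ozx)$. I would then show $\s_{\I x}y$ via \eqref{defsum}. Its first conjunct reduces to $\ing xy$, which we have. For its second conjunct, any $a$ with $\ing ay$ either equals $y$---and then $\ing xx \land \ing xy$ witnesses $Oxa$---or is a proper part of $y$, in which case the contradiction hypothesis gives $Oax$. Since $\s_{\I x}x$ also follows immediately from \eqref{defsum} and \eqref{ring}, uniqueness \eqref{fsum} forces $x=y$, contradicting $x\neq y$.

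For the forward direction of \eqref{sum=sup}, assume $\s_{zz}x$. The clauses $\forall_{y\prec zz}(\ing yx)$ and $\exists y(y\prec zz)$---the latter obtained by applying the overlap conjunct of \eqref{defsum} to $\ing xx$---are immediate. The critical step is the least-upper-bound clause: given $\forall_{v\prec zz}(\ing vy)$, derive $\ing xy$. My first sub-step establishes that every ingredient of $x$ overlaps $y$, by using the overlap conjunct of \eqref{defsum} to obtain $v\prec zz$ with $Ova$, and then transiting through $\ing vy$ via \eqref{transing}. My second sub-step applies \eqref{esum} to form a fusion $u$ of $\I x \cup \I y$ and shows $y=u$ (whence $\ing xy$ follows from $\ing xu$): if $y\neq u$ then $\pr yu$ by \eqref{defpring}, and \eqref{wsp} yields $w$ with $\pr wu$ and $\neg Owy$; however, the overlap conjunct of $\s_{\I x\cup\I y}u$ forces $w$ to overlap either $x$ or $y$, where overlap with $y$ is excluded and overlap with $x$ produces $Owy$ via the sub-step and \eqref{transing}, a contradiction.

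For the reverse direction, assume $\supr_{zz}x$, so $\exists y(y\prec zz)$, and by \eqref{esum} some $u$ satisfies $\s_{zz}u$. By the forward direction already shown, $\supr_{zz}u$. Both $x$ and $u$ are upper bounds of $zz$, so applying their respective least-upper-bound clauses gives $\ing ux$ and $\ing xu$; by \eqref{antising}, $x=u$, hence $\s_{zz}x$.

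The main obstacle is the least-upper-bound clause in the forward direction of \eqref{sum=sup}: the naive inference from ``every ingredient of $x$ overlaps $y$'' to $\ing xy$ is essentially the strong supplementation principle, which is not directly available in our axiomatization. The indirect argument via the auxiliary fusion $\s_{\I x\cup\I y}u$ combined with the freshly-proved \eqref{wsp} is what allows this step to go through, and it is the reason \eqref{wsp} must be established before the fusion/supremum equivalence.
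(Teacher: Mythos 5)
Your proof is correct, and there is nothing in the paper to compare it against: the lemma is stated without proof, the two formulas being referred to the literature (Simons for \eqref{wsp}, Tarski's result in Le\'sniewski's mereology for \eqref{sum=sup}). Your reconstruction checks out in $\mathsf{GEM}^{\mathsf{pl}}_{P}$: under the contradiction hypothesis both $x$ and $y$ satisfy \eqref{defsum} with respect to $\I x$, so \eqref{fsum} forces $x=y$, giving \eqref{wsp}; in the forward half of \eqref{sum=sup} the least-upper-bound clause is obtained soundly via the auxiliary fusion $u$ of $\I x\cup\I y$ (guaranteed by \eqref{esum}) together with the freshly proved \eqref{wsp}, your two sub-steps using only \eqref{ring}, \eqref{transing} and the overlap conjunct of \eqref{defsum}; and the converse direction, identifying the minimal upper bound with the fusion of $zz$ by \eqref{antising}, is fine. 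Your closing diagnosis is also apt: passing from ``every ingredient of $x$ overlaps $y$'' to $\ing xy$ is strong supplementation, which is not an axiom here, and the detour through $\s_{\I x\cup\I y}u$ combined with \eqref{wsp} is precisely what replaces it; this is essentially the classical Tarski-style argument the paper implicitly relies on.
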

Weak supplementation principle  $\eqref{WSP}$ was analyzed, and used as anaxiom, by P. Simons in his minimal extensional mereology \cite[28]{Si87}, but its stronger versions were also used as axioms by F. Drewnowski and Leśniewski \cite{Sw}. Analog of \eqref{sum=sup} was proved in Leśniewski's mereology by A. Tarski \cite[327–328]{Le92}.
\begin{lemma}\label{lid}
In $\mathsf{GEM}^{\mathsf{pl}}_{P}$ formula \eqref{fs} is provable.
\end{lemma}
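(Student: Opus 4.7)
My plan is to reduce \eqref{fs} to the uniqueness of fusion \eqref{fsum}. Specifically, assuming $F_{\I y}x$, if I can independently establish $F_{\I y}y$, then \eqref{fsum} immediately yields $x=y$. So the task becomes verifying the defining clauses of \eqref{defsum} for the pair $(\I y, y)$.

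For the first clause of \eqref{defsum}, I need $\forall_{z\prec \I y}(\ing zy)$. By \eqref{I}, the only $z$ with $z\prec \I y$ is $y$ itself, and $\ing yy$ holds by reflexivity \eqref{ring}. For the second clause, I need to show that every ingredient $z$ of $y$ overlaps some element of $\I y$, i.e., overlaps $y$. Given $\ing zy$, I take the witness $z$ itself in the definition \eqref{O} of overlap: $\ing zz$ by \eqref{ring} and $\ing zy$ by hypothesis give $Ozy$, hence $Oyz$ since $O$ is symmetric by inspection of \eqref{O}. This delivers $F_{\I y}y$.

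With both $F_{\I y}x$ and $F_{\I y}y$ in hand, \eqref{fsum} directly gives $x=y$, which is \eqref{fs}.

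The proof is essentially routine; the only thing to watch is making sure the $\exists y(y\prec zz)$-style side conditions that would be needed for \eqref{esum} do not enter, since here we are only checking the two universal conjuncts of \eqref{defsum} and then invoking uniqueness. I do not anticipate a genuine obstacle: the key conceptual move is simply recognizing that \eqref{fs} is the special case ``$y$ is the fusion of $\{y\}$,'' which is forced by reflexivity of $\ing$ together with functionality of $F$.
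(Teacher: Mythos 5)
Your proof is correct, but it takes a genuinely different route from the paper's. You establish $F_{\I y}y$ outright by checking the two clauses of \eqref{defsum} for the pair $(\I y, y)$ --- both reduce, via \eqref{I} and \eqref{O}, to reflexivity \eqref{ring} --- and then conclude $x=y$ from the functionality axiom \eqref{fsum}, which is indeed one of the axioms of $\mathsf{GEM}^{\mathsf{pl}}_{P}$, so the appeal is legitimate. The paper instead extracts $\ing yx$ from the assumption $F_{\I y}x$ via \eqref{defsum}, then invokes the minimal-upper-bound characterization \eqref{sum=sup} to obtain $\ing xy$ (instantiating the minimality clause at $y$, whose antecedent again holds by \eqref{ring}), and finishes with antisymmetry \eqref{antising}. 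Your version buys independence from \eqref{sum=sup}, a nontrivial lemma that the paper states without a printed proof (it is Tarski's result), at the cost of leaning on the axiom \eqref{fsum}; the paper's version instead displays \eqref{fs} as a consequence of the partial-order axioms \eqref{ring} and \eqref{antising} together with the Mub reading of fusion. Both arguments are sound, and your remark that no nonemptiness side condition in the style of \eqref{esum} is needed is accurate, since \eqref{defsum} contains no such clause and $\I y$ is in any case inhabited by $y$.
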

\begin{proof}
We assume $\s_{\I{y}}x$. We get $\ing yx$ and using \eqref{sum=sup} we take $\forall_{v\prec \I y} (\ing vy)\to \ing xy)$. We take $y/v$ and using \eqref{ring} we obtain $\ing xy$. Thus we have  $\ing yx$ and $\ing yx$ so using \eqref{antising} we have $x=y$.
\label{extlms}
\end{proof}
\begin{lemma}
In $\mathsf{GEM}^{\mathsf{pl}}_{P}$ principle of fusion extensionality \eqref{ext} is provable. 
\end{lemma}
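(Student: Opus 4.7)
\smallskip
The plan is to work entirely in terms of the minimal-upper-bound characterization \eqref{sum=sup} already established in the previous lemma. Assume the three premises of \eqref{ext}: $F_{zz}x$, $F_{yy}x$ and $F_{uu\cup zz}v$. Via \eqref{sum=sup} these become $Mub_{zz}x$, $Mub_{yy}x$ and $Mub_{uu\cup zz}v$, and by the same equivalence it suffices to prove $Mub_{uu\cup yy}v$, that is, the three conjuncts of \eqref{defsup}: non-emptiness of $uu\cup yy$, the fact that $v$ is an upper bound of $uu\cup yy$ in the sense of $\ing$, and minimality of $v$ among such upper bounds.

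First I would dispatch non-emptiness: since $Mub_{yy}x$ requires $\exists t(t\prec yy)$, we have $\exists t(t\prec uu\cup yy)$ immediately by \eqref{cup}. Next, to show $v$ is an upper bound I would establish the auxiliary fact $\ing xv$: from $Mub_{zz}x$ we have $\forall_{s\prec zz}(\ing sx)$, while from $Mub_{uu\cup zz}v$ we have $\forall_{s\prec zz}(\ing sv)$; the minimality clause of $Mub_{zz}x$ applied to $v$ then yields $\ing xv$. Now take any $t\prec uu\cup yy$. If $t\prec uu$, then $t\prec uu\cup zz$ and $\ing tv$ follows from the upper-bound clause of $Mub_{uu\cup zz}v$. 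If $t\prec yy$, then $\ing tx$ by $Mub_{yy}x$, and combining with $\ing xv$ via \eqref{transing} gives $\ing tv$.

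Finally, for minimality, assume $\forall_{t\prec uu\cup yy}(\ing tw)$ and derive $\ing vw$. From the assumption, $\forall_{t\prec uu}(\ing tw)$ and $\forall_{t\prec yy}(\ing tw)$. The minimality clause of $Mub_{yy}x$ applied to $w$ yields $\ing xw$. Now I would bootstrap this to $\forall_{t\prec uu\cup zz}(\ing tw)$: for $t\prec uu$ we already have $\ing tw$; for $t\prec zz$ we have $\ing tx$ by $Mub_{zz}x$, hence $\ing tw$ by \eqref{transing}. The minimality clause of $Mub_{uu\cup zz}v$ applied to $w$ then delivers $\ing vw$, completing the verification of $Mub_{uu\cup yy}v$ and hence, via \eqref{sum=sup}, the desired $F_{uu\cup yy}v$.

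The main obstacle is the bridging step $\ing xv$: $x$ and $v$ are fusions of different pluralities, and the only way to link them is to exploit the fact that $zz$ is a common ``witness'' — $v$ is an upper bound of $zz$ because $zz\are uu\cup zz$, and $x$ is the minimal such upper bound. The symmetric bridging via $\ing xw$ in the minimality part is essentially the same move on the other side. Once one recognises that this is the only nontrivial use of the hypothesis $F_{yy}x$ (the one that transfers information about $yy$ into information about $x$, which is already known to sit below $v$), the rest is routine upper-bound chasing with \eqref{ring} and \eqref{transing}.
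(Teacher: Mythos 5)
Your proof is correct, but it takes a genuinely different route from the paper's. You verify the conclusion $\s_{uu\cup yy}v$ entirely through the minimal-upper-bound characterization, i.e.\ you prove $\supr_{uu\cup yy}v$ clause by clause (non-emptiness, upper bound, minimality) and then invoke \eqref{sum=sup} in the direction $\supr\to\s$ at the end. The paper instead uses \eqref{sum=sup} only once, and only in the direction $\s\to\supr$, to extract the minimality clause of $\supr_{zz}x$ and derive the bridging fact $\ing xv$ --- exactly the step you correctly identify as the crux; after that it verifies the conclusion directly against the overlap-based definition \eqref{defsum}, showing by an indirect argument that every $u$ with $\ing uv$ overlaps something in $uu\cup yy$ (if $u$ overlaps nothing in $uu$ it must overlap some $a\prec zz$, hence overlap $x$, hence overlap something in $yy$). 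Your version is more modular and avoids all overlap-chasing by pushing it into the already-established equivalence \eqref{sum=sup}; the cost is that you lean on both directions of that equivalence, whereas the paper needs only one. Both routes are legitimate since \eqref{sum=sup} is available as a prior lemma, and your minimality argument (bootstrapping $\forall_{t\prec uu\cup yy}(\ing tw)$ to $\forall_{t\prec uu\cup zz}(\ing tw)$ via $\ing xw$) is a clean symmetric counterpart of the upper-bound step. One small point worth making explicit in a final write-up: passing from $\s_{yy}x$ to the non-emptiness conjunct $\exists t(t\prec yy)$ of $\supr_{yy}x$ is licensed by \eqref{sum=sup} itself (or directly from \eqref{defsum} with $\ing xx$ from \eqref{ring}), so it is not an additional assumption.
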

\begin{proof}
We assume (i): $\s_{zz}x$ (ii): $\s_{yy}x$, and (iii): $\s_{uu\cup zz}v$. From (i) and \eqref{sum=sup} we have $\forall_{z\prec zz}(\ing zv)\to\ing xv$. Predecessor, we obtain from (iii) and \eqref{defsum}, so $\ing xv$.
 Now from (ii) and \eqref{defsum} we have $\forall_{z\prec yy}(\ing zx)$, thus by \eqref{transing} and obtained $\ing xv$ we have $\forall_{z\prec yy}(\ing zv)$. From the latter using (iii) and \eqref{defsum} we get $\forall_{z\prec uu\cup yy}(\ing zv)$. Now for any $u$ such that $\ing uv$ we assume indirectly that $\forall_{y\prec uu\cup yy}(\neg O u y)$, i.e. ($\star$): $\forall_{y\prec uu}(\neg O u y)$ and $(\star\star)\colon \forall_{y\prec yy}(\neg O uy)$. From $\ing uv$, (iii), \eqref{defsum}, and ($\star$) we obtain $O ua$ for some $a\prec zz$, thus $Pax$, because $F_{zz}x$. Now using $\ing ax$ and $O ua$ we have $O ux$, i.e there is $b$ such that $\ing bu \land \ing bx$. From $\ing bx$, (ii) and \eqref{defsum} we have $\exists_{y\prec yy}(O b y)$, and so using $\ing bu$ we get $\exists_{y\prec yy}(O uy)$ which contradicts with ($\star\star$). So finally we have $\forall_{z\prec uu\cup yy}(\ing zv)\land \forall u (\ing uv\to  \exists_{z\prec uu\cup yy}(O zu))$ and $F_{yy\cup uu}v$ by \eqref{defsum}. 
\end{proof}
In $\mathsf{GEM}^{\mathsf{pl}}_{P}$ we define symbol $\UInt$ in the following way
\begin{equation}\tag{$\mathtt{df}\mathsf{\UInt}_{P}$} \label{UP}x\prec \UInt zz\leftrightarrow \exists_{y\prec zz} Pxy.
\end{equation}
\begin{lemma} \label{LMScomp}
In $\mathsf{GEM}^{\mathsf{pl}}_{P}+\eqref{UP}$ principle of composition \eqref{comp} is provable.
\end{lemma}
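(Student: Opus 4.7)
The plan is to construct the required plurality $vv$ explicitly via the comprehension schema, and then verify the three requirements: the inclusion $vv\are \UInt zz$, non-emptiness, and the fusion property $\s_{vv}x$.

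First I would introduce $vv$ by the instantiation of comprehension
\[
w\prec vv \;\leftrightarrow\; \ing wx \,\land\, \exists_{y\prec zz}\ing wy.
\]
The inclusion $vv \are \UInt zz$ is then immediate from \eqref{UP}, since every $w\prec vv$ is by construction an ingredient of some $y\prec zz$. Observe also that $x\prec zz\cup\I x$, so from the assumption $\s_{zz\cup\I x}y$ and \eqref{defsum} I read off $\ing xy$; this will be used repeatedly.

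Next I would establish non-emptiness. From $\s_{zz}y$ and $\ing xy$, the second clause of \eqref{defsum} produces $v\prec zz$ with $O v x$; unfolding \eqref{O} yields $b$ with $\ing bv \land \ing bx$. Both conditions defining $vv$ are met, hence $b\prec vv$ and $\exists z(z\prec vv)$ is discharged.

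For $\s_{vv}x$ I would verify the two conjuncts of \eqref{defsum}. The first, $\forall_{w\prec vv}\ing wx$, is immediate from the defining formula of $vv$. For the second, take any $u$ with $\ing ux$; by \eqref{transing} and $\ing xy$ one obtains $\ing uy$, so $\s_{zz}y$ together with \eqref{defsum} gives some $v\prec zz$ with $O v u$, witnessed by $c$ satisfying $\ing cv\land \ing cu$. Transitivity applied to $\ing cu$ and $\ing ux$ yields $\ing cx$, so $c\prec vv$. Finally, the same $c$, together with reflexivity \eqref{ring}, witnesses $O u c$ via \eqref{O}, which furnishes the required $\exists_{w\prec vv}(O u w)$.

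The main obstacle is purely in picking the right defining formula for $vv$: once $vv$ is taken as the ingredients of $x$ which are simultaneously ingredients of some element of $zz$, both the inclusion into $\UInt zz$ and the overlap clause of fusion become a double use of transitivity of $\ing$. The only subtle step is to recognize that in the overlap clause the common ingredient $c$ produced by $O v u$ plays a double role, as a member of $vv$ and as the witness of $O u c$; all remaining manipulations are routine applications of \eqref{defsum}, \eqref{transing}, and \eqref{ring}.
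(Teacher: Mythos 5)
Your proof is correct, but it takes a genuinely different route from the paper's. The plurality is the same in both: your comprehension-defined $vv$ (the parts of $x$ that are parts of some member of $zz$) is, via \eqref{UP}, exactly the paper's $zz^{*}$, and both arguments establish its non-emptiness identically, by applying the overlap clause of $\s_{zz}y$ to $\ing xy$ (itself read off from $\s_{zz\cup\I x}y$). The verifications then diverge. The paper argues indirectly: it assumes no subplurality of $\UInt zz$ fuses to $x$, invokes \eqref{esum} to obtain a fusion $a$ of $zz^{*}$, uses the characterization \eqref{sum=sup} to get $\ing ax$, and then weak supplementation \eqref{wsp} together with \eqref{transing} to force $a=x$, contradicting the assumption. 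You instead verify $\s_{vv}x$ directly against \eqref{defsum}: the first clause is built into the defining condition of $vv$, and for the overlap clause you push an arbitrary part $u$ of $x$ up to $y$ by transitivity, extract $v\prec zz$ and a common part $c$ from $\s_{zz}y$, and observe that $c\prec vv$ and that $c$ itself witnesses $Ouc$ via \eqref{ring}. Your route is more elementary: it needs only \eqref{defsum}, \eqref{ring}, \eqref{transing}, \eqref{UP}, \eqref{O} and comprehension, dispensing entirely with \eqref{esum}, \eqref{wsp}, \eqref{sum=sup} and the double reductio; the paper's version buys nothing beyond re-use of lemmas already proven earlier in that section.
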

\begin{proof}
Proof indirect. We assume $F_{zz\cup\I x}y \land F_{zz}y $ and $\neg \exists_{yy\are  \UInt zz} (F_{yy}x)$. From $F_{zz\cup\I x}y$ and the definition of fusion \eqref{defsum} we obtain $P xy$. Let 
$u\prec zz^{*}\leftrightarrow \ing ux \land u\prec \UInt zz$.
Full comprehension schema guarantees that such a plurality exists. From $F_{zz}y$, $Pxy$ and \eqref{defsum}, we know that $\exists_{ z\prec zz} Ozx$ thus $\exists z 
\exists u(P u x\land Puz  \land z\prec zz)$, $ 
\exists u(P u x\land \exists z(Puz  \land z\prec zz))$ and next $
\exists u(Pux  \land u\prec \UInt zz)$ by \eqref{UP}, i.e. $\exists u(u\prec zz^{*})$ by definition od $zz^{*}$, thus  from \eqref{esum} for some $a$ we have $F_{zz^{*}}a$. From  $\neg \exists_{yy\are  \UInt zz} (F_{yy}x)$ we have (i): $\neg F_{zz^{*}}x$, because $zz^{*}\are \UInt zz$. From $F_{zz^{*}}a$ and \eqref{sum=sup}  we have $\supr_{zz^{*}}a$, thus $\forall_{z\prec zz^{*}}(\ing zx)\to \ing ax)$, and so $\ing ax$, from the definition of $zz^{*}$. Assume furthermore that $a\not =x$. Then from $\pr ax$ and \eqref{wsp} we have that exists $v$ such that $\ing vx\land \neg Ov a$. 
Now, we additionally indirectly assume that for some $t\prec zz$  we have $O v t$. So there is $u$ such that $\ing ut \land  \ing uv$. From $\ing uv$ and $\ing vx$ we have  $\ing ux$ by \eqref{transing}. Having that $\ing ut$, $t\prec zz$ and $\ing ux$, from definitions of $zz^{*}$ and \eqref{UP} we obtain $u\prec zz^{*}$ but then $\ing ua$ and this contradict to $\ing uv$ and earlier obtained $\neg O v a$. Thus we have $\forall_{t\prec zz} \neg Ov t$. From assumption we have $F_{zz}y$, so from definition \eqref{defsum} and transposition we have $\forall_{t\prec zz} (\neg O t v)\to  \neg \ing vy$, predecessor we already have thus $\neg \ing vy$. However, we have $\ing vx$, and $\ing xy$ from basic assumption, thus by \eqref{transing} $\ing vy$, which gives a contradiction. So, we have $x=a$ and so $\neg F_{zz^{*}}a$ by (i) but we know that $a$ is the sum of $zz^{*}a$ thus contradiction.
\end{proof}
\begin{lemma}\label{WSP+tosep}
In $\mathsf{GEM}^{\mathsf{pl}}_{P}+\eqref{UP}$ formula \eqref{WSP} is provable.
\end{lemma}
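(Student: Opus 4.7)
The statement \eqref{WSP} to prove is
$F_{\I x\cup \I y}y \land x\neq y \to \exists_{z\prec \UInt\I y}\,\neg\exists u\,F_{\UInt \I x\cap\UInt \I z}u$,
and the plan is to read off a proper part of $y$ disjoint from $x$ via the weak supplementation principle \eqref{wsp}, and use it as the witness $z$.

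First I assume $F_{\I x\cup \I y}y$ and $x\neq y$. Since $x\prec \I x\cup \I y$, the ``all of $zz$ are parts of $x$'' clause of \eqref{defsum} gives $\ing xy$; together with $x\neq y$ and \eqref{defpring} this upgrades to $\pr xy$. I then apply \eqref{wsp} to secure a witness $z$ with $\pr zy$ and $\neg O zx$. This $z$ will be the existential witness demanded by \eqref{WSP}.

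Next I verify that $z\prec \UInt \I y$. By \eqref{I}, $y\prec \I y$, and from $\pr zy$ one has $\ing zy$; applying \eqref{UP} immediately yields $z\prec \UInt \I y$. It then remains to show $\neg\exists u\,F_{\UInt \I x\cap \UInt \I z}u$. I argue by contradiction: suppose $F_{\UInt \I x\cap \UInt \I z}u$ for some $u$. By \eqref{ring} we have $\ing uu$, so the second conjunct of \eqref{defsum} forces some $v\prec \UInt \I x\cap \UInt \I z$ with $Ovu$ (in particular such a $v$ exists). By \eqref{cap}, $v\prec \UInt \I x$ and $v\prec \UInt \I z$; unpacking via \eqref{UP} and \eqref{I} we get $\ing vx$ and $\ing vz$, and this $v$ witnesses $O zx$ through \eqref{O}. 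This contradicts $\neg O zx$, so the supposed fusion cannot exist.

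The only subtlety — and arguably the main obstacle — is the step extracting an element of $\UInt \I x\cap \UInt \I z$ from the mere existence of its fusion: without this one could in principle have a ``fusion of nothing.'' The point is that \eqref{defsum}, applied with $y$ set to $u$ itself (reflexivity \eqref{ring}), forces the plurality being fused to have a member overlapping $u$, so fusions are never vacuous. After this observation, everything reduces to a direct contradiction with $\neg O zx$.
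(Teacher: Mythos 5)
Your proof is correct and follows essentially the same route as the paper's: derive $\ing xy$ from $F_{\I x\cup\I y}y$ via \eqref{defsum}, upgrade to $\pr xy$, invoke \eqref{wsp} to get the witness $z$ with $\neg Ozx$, place it in $\UInt\I y$ via \eqref{UP}, and rule out any fusion of $\UInt\I x\cap\UInt\I z$ by the same non-vacuity argument from \eqref{defsum} and \eqref{ring} that the paper uses. The only difference is cosmetic: you argue directly and localize the contradiction, whereas the paper runs a global reductio against the negated consequent.
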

\begin{proof}
We assume $\s_{\I x \cup \I y}y \land x\not = y$ and $\forall z (z\prec \UInt \I y \to \exists u (\s_{\UInt \I x\cap \UInt \I z }u))$. From $\s_{\I x \cup \I y}y$ and definition of fusion \eqref{defsum} we obtain $Pxy$, thus using assumed $x\not = y$ we have $PPxy$. Using the latter and \eqref{wsp} we obtain $PP ay \land \neg O ax$ for some $a$. From $Pay$ and definition of \eqref{UP} we obtain $a\prec \UInt \I y$, thus from assumption taking $a/z$ we obtain $\s_{\UInt \I x\cap \UInt \I a }b$ for some $b$. The latter by definition of fusion \eqref{defsum} and \eqref{ring} implies that $\exists z (z \prec \UInt \I x\cap \UInt \I a)$. Thus by definition \eqref{UP} we have $\exists z (Pz x \land Pz a)$, i.e. $Oxa$ which gives a  contradiction.
\end{proof}
\begin{lemma}\label{lemextF}
In $\mathsf{GEM}^{\mathsf{pl}}_{P}$ formula \eqref{F=} is provable.
\end{lemma}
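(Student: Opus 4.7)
The plan is to unfold $F_{xx}z$ using the definition \eqref{defsum}, rewrite the bounded quantifiers using the assumption $xx\approx yy$, and then fold the result back using \eqref{defsum} to get $F_{yy}z$.

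More concretely, I would proceed as follows. Assume $F_{xx}z$ and $xx\approx yy$. By \eqref{defsum} applied to $F_{xx}z$, we obtain the two conjuncts
$\forall_{u\prec xx}(\ing u z)$ and $\forall u(\ing u z \to \exists_{v\prec xx}(O v u))$. Recall that $xx\approx yy$ unfolds, by \eqref{=} and \eqref{are}, to $\forall w (w\prec xx \leftrightarrow w \prec yy)$. This biconditional lets me rewrite every bounded quantifier of the form $Q_{w\prec xx}\,\varphi$ as $Q_{w\prec yy}\,\varphi$ (for both universal and existential $Q$), since the bound is simply a restriction by a one-place predicate.

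Applying this rewriting to the first conjunct yields $\forall_{u\prec yy}(\ing u z)$; applying it inside the existential of the second conjunct yields $\forall u(\ing u z \to \exists_{v\prec yy}(O v u))$. These are exactly the two conjuncts that, by the right-to-left direction of \eqref{defsum}, deliver $F_{yy}z$. That completes the proof.

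There is no genuine obstacle here: the lemma is really just a compatibility check that the $F$ defined in $\mathsf{GEM}^{\mathsf{pl}}_{P}$ via \eqref{defsum} respects extensional equality of pluralities, and this is forced by the fact that $zz$ appears in the definition only through bounded quantifiers $\forall_{y\prec zz}$ and $\exists_{v\prec zz}$. The only subtlety to be careful about is to write the argument in the two-sorted setting, invoking \eqref{=} and \eqref{are} explicitly to extract the elementwise biconditional that licenses the substitution in the bounded quantifiers.
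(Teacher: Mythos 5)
Your proof is correct and is essentially the paper's own argument: both unfold $F$ via \eqref{defsum} and use the elementwise transfer supplied by $xx\approx yy$ (via \eqref{=} and \eqref{are}) to move the bounded quantifiers from $xx$ to $yy$; the only difference is that you argue directly while the paper casts the same reasoning as a reductio, which changes nothing of substance.
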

\begin{proof}
We assume indirectly that $F_{zz}x\land zz\approx yy$ and $\neg F_{yy}x$
We obtain (i): $\forall_{z\prec zz} Pz x$, (ii): $\forall y(P y x
\to \exists_{z\prec zz} O z y)$, and (iii): $\neg \forall_{z \prec
yy} Pz x \lor \neg \forall y(P y x \to \exists z (z\prec yy \land  O
z y ))$. If $\neg \forall_{z\prec yy} Pz x$ then we take
$a/z\colon a\prec yy \land \neg Pax$, so $a\prec zz$ by $zz\approx yy$
but then using (i) we have $Pax$ which is false. Therefore, we have
$\forall_{z\prec yy} Pz x$ so using (iii) we obtain $\neg \forall
y(P y x \to \exists z (z\prec yy \land  O z y ))$ we take $b/y\colon Pbx
\land \forall_{z\prec yy} \neg O z b$. From $Pbx$ and (ii) we have
that $\exists_{z\prec zz}  O z b$ so we take $c/z\colon$
$c\prec zz \land Ocb$. From  $c\prec zz$ and  $zz\approx yy$ we have
$c\prec yy$ thus using $\forall_{z\prec yy}\neg O z b$ and $c/z$ we obtain $\neg Ocb$
which gives a contradiction.
\end{proof}
So, all axioms of $\mathsf{GEM}^{\mathsf{pl}}_{F}$ are theorems of $\mathsf{GEM}^{\mathsf{pl}}_{P}$. Lastly, what we need to show is that the definition \eqref{dfing} in $\mathsf{GEM}^{\mathsf{pl}}_{F}$ is a thesis in  $\mathsf{GEM}^{\mathsf{pl}}_{P}$, definition of \eqref{defUcpn} in $\mathsf{GEM}^{\mathsf{pl}}_{F}$ is a thesis in $\mathsf{GEM}^{\mathsf{pl}}_{P}$ and conversely
that definition \eqref{UP} in $\mathsf{GEM}^{\mathsf{pl}}_{P}$ is thesis in $\mathsf{GEM}^{\mathsf{pl}}_{F}$. 
\begin{lemma}\label{defPF}
Definition \eqref{dfing} in $\mathsf{GEM}^{\mathsf{pl}}_{F}$ is a thesis in $\mathsf{GEM}^{\mathsf{pl}}_{P}$.
\end{lemma}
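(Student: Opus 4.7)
The plan is to establish the biconditional $\ing xy \leftrightarrow \exists zz(\s_{zz}y \land x\prec zz)$ inside $\mathsf{GEM}^{\mathsf{pl}}_{P}$ by unfolding the explicit definition of fusion \eqref{defsum} and choosing a natural witness for the existential.

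For the `$\leftarrow$' direction, I would assume $\exists zz(\s_{zz}y \land x \prec zz)$ and fix a witness $zz$. The first conjunct of \eqref{defsum} applied to $\s_{zz}y$ reads $\forall_{w \prec zz}\,\ing wy$; instantiating with $x$, which belongs to $zz$ by hypothesis, yields $\ing xy$ immediately.

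For the `$\to$' direction, I would assume $\ing xy$ and propose $\I x \cup \I y$ as the required plurality. Membership $x \prec \I x \cup \I y$ is immediate from \eqref{I} and \eqref{cup}. It then suffices to check $\s_{\I x \cup \I y}y$ by verifying both conjuncts of \eqref{defsum}. For the universal-ingredient conjunct, any $z$ with $z \prec \I x \cup \I y$ equals $x$ or $y$, so $\ing zy$ follows from the hypothesis $\ing xy$ in the first case and from reflexivity \eqref{ring} in the second. For the overlap conjunct, given any $w$ with $\ing wy$, the element $y$ itself lies in $\I x \cup \I y$ and $Oyw$ holds because $w$ is a common ingredient of $y$ and $w$ --- one applies \eqref{O} using the hypothesis $\ing wy$ and the reflexivity instance $\ing ww$.

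No genuine obstacle arises: the argument is a routine unfolding of \eqref{defsum} once the witness $\I x \cup \I y$ is chosen, and each clause needs nothing beyond the hypothesis $\ing xy$, reflexivity \eqref{ring}, and the definition of overlap \eqref{O}. The witness choice is essentially forced, since it must contain $x$ (for the conclusion $x \prec zz$) and at least one element overlapping every ingredient of $y$, for which $y$ itself is the obvious candidate.
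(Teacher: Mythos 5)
Your proof is correct and follows essentially the same route as the paper: both directions unfold \eqref{defsum}, and the `$\to$' direction rests on the witness $\I x \cup \I y$, with the overlap clause discharged by noting that any ingredient of $y$ overlaps $y$ itself via \eqref{ring} and \eqref{O}. The only difference is presentational: you verify $\s_{\I x\cup\I y}y$ directly, while the paper argues indirectly by assuming $\neg \s_{\I x\cup\I y}y$ and deriving the same contradiction from the failed overlap clause.
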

\begin{proof}
(``$\leftarrow$'') We assume $\exists zz (F_{zz}y\land x\prec zz)$. From the definition of fusion \eqref{defsum} we know that all $zz$ are parts of $y$ thus also $x$, i.e. $Pxy$. (``$\to$''). We assume indirectly that $Pxy$ and $\forall zz (F_{zz}y\to \neg x\prec zz)$. We take $\I x\cup \I y/zz$ and obtain $F_{\I x\cup \I y}y\to \neg x\prec \I x\cup \I y$. The right side of the latter implication is false, thus $\neg F_{\I x\cup \I y}y$. We have that $\forall_{z\prec \I x\cup \I y}Pzy$, thus from the definition of fusion \eqref{defsum} and $\forall_{z\prec \I x\cup \I y}Pzy$ we obtain that for some $a$ we obtain $Pay$ and $\neg O ay$ which is contradictory.
\end{proof}
As an immediate consequence of Lemma \ref{defPF}, we obtain 
\begin{lemma}\label{defUF}
Definition \eqref{defUcpn} in $\mathsf{GEM}^{\mathsf{pl}}_{F}$ is a thesis of $\mathsf{GEM}^{\mathsf{pl}}_{P}+\eqref{UP}$.
\end{lemma}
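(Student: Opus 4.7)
The plan is to observe that this is essentially an immediate corollary: the two definitions differ only in how they unpack the underlying part-relation, and Lemma \ref{defPF} has already established the equivalence of the two ways of doing so.

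More concretely, I would first rewrite the right-hand side of \eqref{defUcpn}, namely
$\exists_{z\prec zz}\exists yy(F_{yy}z \land x\prec yy)$,
by pulling out the existential over $yy$ and applying the right-to-left direction of \eqref{dfing} (which is a thesis of $\mathsf{GEM}^{\mathsf{pl}}_{P}$ by Lemma \ref{defPF}) to turn $\exists yy(F_{yy}z \land x \prec yy)$ into $Pxz$. This gives $\exists_{z\prec zz}Pxz$, which is exactly the right-hand side of \eqref{UP}, hence equivalent to $x \prec \UInt zz$. For the other direction, from $x \prec \UInt zz$ I use \eqref{UP} to obtain some $z\prec zz$ with $Pxz$, and then apply the left-to-right direction of \eqref{dfing} (again provided by Lemma \ref{defPF}) to extract the required $yy$ with $F_{yy}z$ and $x\prec yy$.

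Since both directions are just rewriting steps and no new mereological content is needed beyond \eqref{UP} and Lemma \ref{defPF}, there is no real obstacle; the main thing to be careful about is the bookkeeping of the bound variable $zz$, which appears both as the plurality argument of $\UInt$ and as a bound plural variable inside \eqref{dfing}, so in the proof I would rename the inner witness (e.g.\ to $ww$) to avoid clash with the outer $zz$.
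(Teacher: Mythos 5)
Your proof is correct and matches the paper's approach: the paper itself states this lemma as an immediate consequence of Lemma \ref{defPF}, obtained by substituting the equivalence $\ing xz \leftrightarrow \exists yy(F_{yy}z \land x\prec yy)$ into \eqref{UP}, which is exactly your rewriting argument (including the sensible renaming of the bound plural variable).
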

Lemmas \ref{lid}-\ref{defUF} give that 
\begin{theorem}\label{FsubP}
$\mathsf{GEM}^{\mathsf{pl}}_{F}+\eqref{dfing}\subseteq \mathsf{GEM}^{\mathsf{pl}}_{P}+\eqref{UP}$
\end{theorem}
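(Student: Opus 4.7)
The plan is to observe that the theorem reduces to pure bookkeeping over the preceding lemmas, since every axiom of $\mathsf{GEM}^{\mathsf{pl}}_F$ together with the two definitions \eqref{dfing} and \eqref{defUcpn} has already been individually verified in $\mathsf{GEM}^{\mathsf{pl}}_{P}+\eqref{UP}$. So the proof will just enumerate the axiom list of $\mathsf{GEM}^{\mathsf{pl}}_F= \eqref{esum}+\eqref{F=}+\eqref{fs}+\eqref{ext}+\eqref{comp}+\eqref{WSP}$ and match each entry with the corresponding lemma.

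First I would note that \eqref{esum} is already an axiom of $\mathsf{GEM}^{\mathsf{pl}}_{P}$, so no derivation is needed. Next, \eqref{fs} is provided by Lemma \ref{lid}, \eqref{F=} by Lemma \ref{lemextF}, and \eqref{ext} by the intermediate extensionality lemma (the one stated between Lemma \ref{lid} and Lemma \ref{LMScomp}). These three derivations use only $\mathsf{GEM}^{\mathsf{pl}}_{P}$, without appealing to \eqref{UP}. The remaining two axioms \eqref{comp} and \eqref{WSP} do depend on having the defined symbol $\UInt$ available, and they are supplied by Lemma \ref{LMScomp} and Lemma \ref{WSP+tosep}, both carried out in $\mathsf{GEM}^{\mathsf{pl}}_{P}+\eqref{UP}$.

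Finally, I would record that the added definitions are internally consistent with the target theory: \eqref{dfing} is a thesis by Lemma \ref{defPF}, and \eqref{defUcpn} is a thesis by Lemma \ref{defUF}. Putting these items together, every primitive axiom and every definitional axiom of $\mathsf{GEM}^{\mathsf{pl}}_F+\eqref{dfing}$ is derivable in $\mathsf{GEM}^{\mathsf{pl}}_{P}+\eqref{UP}$; closing under logical consequence yields the claimed inclusion. There is no substantive obstacle here: all the mereological work has already been done in Lemmas \ref{lid}--\ref{defUF}, and the only role of this theorem is to package those results as the appropriate theory inclusion, which is the converse half of the definitional equivalence begun in Theorem \ref{PsubF}.
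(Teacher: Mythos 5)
Your proposal is correct and matches the paper's own justification, which simply collects Lemmas \ref{lid}--\ref{defUF} (plus the observation that the fusion-existence axiom is shared by both theories) to conclude that every axiom of $\mathsf{GEM}^{\mathsf{pl}}_{F}$ and both definitions \eqref{dfing} and \eqref{defUcpn} are theses of $\mathsf{GEM}^{\mathsf{pl}}_{P}+\eqref{UP}$. Your bookkeeping, including which lemmas need \eqref{UP} and which use only $\mathsf{GEM}^{\mathsf{pl}}_{P}$, is exactly the intended argument.
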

Definition \eqref{UP} in $\mathsf{GEM}^{\mathsf{pl}}_{P}$ is thesis of  $\mathsf{GEM}^{\mathsf{pl}}_{F}$, and this is simple consequence of the definitions \eqref{dfing} and \eqref{defUcpn}.  Thus, using this, along with Theorems \ref{PsubF} and \ref{FsubP}, we ultimately arrive at the main result of the work. Our axiomatization of general extensional mereology with fusion as a primitive notion is definitionally equivalent to general extensional mereology with the primitive notion of being a part.
\begin{theorem}
$\mathsf{GEM}^{\mathsf{pl}}_{F}+\eqref{dfing}=\mathsf{GEM}^{\mathsf{pl}}_{P}+\eqref{UP}.$
\end{theorem}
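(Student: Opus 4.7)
The statement asserts set-equality between two systems of theorems, which splits into mutual inclusion. The inclusion $\mathsf{GEM}^{\mathsf{pl}}_{F}+\eqref{dfing}\subseteq \mathsf{GEM}^{\mathsf{pl}}_{P}+\eqref{UP}$ is already Theorem~\ref{FsubP}, so I would invoke it directly with no further work. The task therefore reduces to the reverse inclusion $\mathsf{GEM}^{\mathsf{pl}}_{P}+\eqref{UP}\subseteq\mathsf{GEM}^{\mathsf{pl}}_{F}+\eqref{dfing}$.

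For that direction the plan is to check that every axiom and definition of the right-hand theory is derivable in $\mathsf{GEM}^{\mathsf{pl}}_{F}+\eqref{dfing}$. All the part-theoretic axioms \eqref{ring}, \eqref{antising}, \eqref{transing}, together with \eqref{fsum} and the defining equivalence \eqref{defsum} of fusion in terms of part, have been dispatched by Lemma~\ref{lemmartrant} and Lemmas~\ref{cltosum}--\ref{sumtocl}, all packaged as Theorem~\ref{PsubF}; the existence axiom \eqref{esum} is shared between the two systems. So the only extra obligation is to prove \eqref{UP}, i.e.\ $x\prec \UInt zz \leftrightarrow \exists_{y\prec zz} Pxy$, as a thesis of $\mathsf{GEM}^{\mathsf{pl}}_{F}+\eqref{dfing}$.

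This last point is genuinely a one-line calculation: the comprehension definition \eqref{defUcpn} says $x\prec \UInt zz \leftrightarrow \exists_{z\prec zz}\exists yy(F_{yy}z \land x\prec yy)$, and the definition \eqref{dfing} says $\exists yy(F_{yy}z \land x\prec yy) \leftrightarrow Pxz$. Chaining these two equivalences under the bounded existential $\exists_{z\prec zz}$ yields \eqref{UP}. I would therefore write Lemma~X to record this unfolding, then conclude by combining Theorem~\ref{FsubP} with Theorem~\ref{PsubF} augmented by Lemma~X.

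\textbf{Expected obstacle.} There is essentially none: the substantive content (axiomatic equivalence under the two primitives) was already absorbed by Theorems~\ref{PsubF} and~\ref{FsubP}, and by the lemmas on \eqref{fs}, \eqref{ext}, \eqref{comp}, \eqref{WSP}, \eqref{F=}, and \eqref{dfing}/\eqref{defUcpn}. The final theorem is a bookkeeping corollary. The only care needed is to confirm that the equivalence of the two \textsf{U}-definitions \eqref{defUcpn} and \eqref{UP} holds in both directions inside each theory, so that the named definitional extensions really do coincide; as sketched above, this is immediate from \eqref{dfing} in one direction and from Lemma~\ref{defPF} (already proved) in the other.
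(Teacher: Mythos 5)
Your proposal is correct and matches the paper's own argument: the paper likewise combines Theorems \ref{PsubF} and \ref{FsubP} with the observation that \eqref{UP} follows immediately in $\mathsf{GEM}^{\mathsf{pl}}_{F}+\eqref{dfing}$ by chaining \eqref{defUcpn} with \eqref{dfing}. No gaps; your ``Lemma X'' is exactly the paper's one-sentence remark preceding the final theorem.
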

\begin{small}

\end{small}

\begin{thebibliography}{99}
\bibitem{CoVa21} Cotnoir A. J. Varzi C. A. (2021) \textit{Mereology}, Oxford University Press
\bibitem{Ho08} Hovda, P. (2008) ``What is Classical Mereology?'', \textit{Journal of Philosophical Logic}, 38: 55–82
\bibitem{Fi10} Fine, K. (2010) ``Towards a Theory of Part'', \textit{Journal of Philosophy}, 107(11):  559–589
\bibitem{FlLi21} Florio, S., Linnebo, Ø. (2021) \textit{The Many and the One: A Philosophical Study of Plural Logic}, Oxford: Oxford University Press
 \bibitem{KeSh16} Ketland, J. and Schindler, T. (2016) ``Arithmetic with Fusions'', \textit{Logique et Analyse}, 59: 207–226
\bibitem{Kl17} Kleinschmidt, S. (2019) ``Fusion First'', \textit{Noûs}, 53(3): 689-707
\bibitem{Le92} Leśniewski, S. (1992) Collected Works I and II; Surma, S.J., Srzednicki, J.T., Barnett, D.I., Rickey, V.F., Eds.; Kluwer: Dordrecht, The Netherlands, 1992
\bibitem{Ly22} Łyczak, M. (2022)``Atomism Axiomatised Using Mereological Composition as a Primitive Notion'', 
\url{https://doi.org/10.48550/arXiv.2310.15087} 
\bibitem{MaAr22} Manzano, M., Aranda, W. (2022) ``Many-Sorted Logic'', \textit{The Stanford Encyclopedia of Philosophy}, \url{https://plato.stanford.edu/entries/logic-many-sorted/#SecoOrdeLogi}
\bibitem{Ma05} Manzano, M. (2005) \textit{Extensions of First-Order Logic}, Cambridge Tracts in Theoretical Computer Sci-ence, Vol. 19, Cambridge: Cambridge University Press
\bibitem{Si87} Simons, P. M. (1987) \textit{Parts: A Study in Ontology}, Oxford: Clarendon Press.
\bibitem{Sw} Świętorzecka, K., Łyczak, M. (2020) ``Mereology with Super-Supplementation Axioms'',
\textit{Logic and Logical Philosophy}, 29(2): 189–211
\bibitem{Sb84} Sobociński, B. (1984) ``Studies in Leśniewski’s Mereology'', In: Srzednicki, J.T.J.,
Rickey, V.F. (eds) Leśniewski’s Systems. Nijhoff International Philosophy Series,
vol. 13, Dordrecht: Springer
\bibitem{Va16}  Varzi, A. (2019) ``Mereology'', \textit{The Stanford Encyclopedia of Philosophy} (Spring 2019 Edition), E. Zalta (ed.),
\url{https://plato.stanford.edu/archives/spr2019/entries/mereology/}
\end{thebibliography}
\end{document}